\newcommand{\RNum}[1]{\lowercase\expandafter{\romannumeral #1\relax}}
\newtheorem{theorem}{Theorem}[section]
\newtheorem{proposition}{Proposition}[section]
\newcommand{\beq}{\begin{equation}}
\newcommand{\eeq}{\end{equation}}
\newtheorem{definition}{Definition}[section]
\newtheorem*{equiv*}{Equivalence Relation}
\newtheorem{corollary}{Corollary}[section]
\newtheorem{equivalence}{Equivalence Relation}
\title[EXTENDED EQUIVALENCE OF FUZZY SETS ]{EXTENDED EQUIVALENCE OF FUZZY SETS }
\author{Venkat Murali}
\address{Department of Mathematics, Rhodes University, Grahamstown, 6139, South Africa}
\email{ v.murali@ru.ac.za}
\author{Sithembele Nkonkobe}
\address{Department of Mathematical Sciences, Sol Plaatje University, Kimberley, South\\\indent\indent 8301 Africa}
\email{snkonkobe@gmail.com}
\date{\today}
\subjclass[2010]{03E72, 05A10, 05A15, 05A16, 11B73}
\keywords{pinned flags, equivalence relation, fuzzy sets}
\begin{document}
	\begin{abstract}  
	  	Preferential equality is an equivalence relation on fuzzy subsets of finite sets and is a generalization of classical equality of subsets. In this paper we introduce a tightened version of the preferential equality on fuzzy subsets   and derive some important combinatorial formulae for the number of such tight  fuzzy subsets of an $n$-element set where $n$ is a natural number. We also offer some asymptotic results.
	\end{abstract}
	\maketitle
	
	\section{Introduction}
A fuzzy subset of an $n$-element set $X_n=\{x_1,x_2,\ldots,x_n\}$ is a mapping $\mu:X_n\rightarrow [0,1]$ (see~\cite{zadehfuzzy}). For $\alpha \in [0,1]$, we refer to the subset  $ \{ x \in X_n :\mu(x) \geq \alpha \} $ of $X_n$ as an $ \alpha-cut$ of $\mu$. Clearly the $\alpha-cuts $ form a chain of crisp subsets of $X_n$ as $\alpha $ runs through [0,1], with order reversing inclusion. The $\mu $ may be captured through $\alpha$-cuts in the following convenient and suggestive way. First we call a maximal chain $\mathcal{C}_{\mu}:X_0\subseteq X_1\subseteq X_2\subseteq\cdots\subseteq X_n$ in the lattice of the power set $\mathcal{P}(X_n)$ a $ flag $ on $X_n$ and secondly we call  a set of ordered real numbers $\lambda_i$  in the unit interval as in    $ \ell_{\mu}:=1=\lambda_0\geq\lambda_1\geq\cdots\geq\lambda_ n\geq 0$ a $ keychain $, where the $\lambda_i$'s are called \textit{pins}. Now by a well-known $\alpha-cuts $ representation (for instance, see~\cite{Murali2006(2),murali2004fuzzy,murali2005(3),MuraliMakamba2005(1)}) we arrive at a fuzzy subset $\mu=\{\lambda_i\chi_{X_i}:0\leq i\leq n\}$ which can be written suggestively as follows: 
$X^1_0\subseteq X_1^{\lambda_1}\subseteq\cdots\subseteq X_n^{\lambda_n}=\{\lambda_i\chi_{X_i}:0\leq i\leq n\}=\mu$. Then, the pair $(\mathcal{C}_{\mu},\ell_{\mu})$ is called a
$pinned \, flag$, having $\mu$ as the associated fuzzy set. Clearly the $\alpha-cuts $ of $\mu$ as above form the chain $\mathcal{C}_{\mu}$.

\begin{equivalence}\cite{rosenfeld1971fuzzy}\label{equivalence:1*}
	Let $\mathcal{G}$ be a group. Then, a mapping $\mu:\mathcal{G}\rightarrow[0,1]$ is said to be a fuzzy subgroup of $\mathcal{G}$ if 
	
	\RNum{1}). $\mu(xy)\geq \min(\mu(x),\mu(y))$ $\forall$ $x,y\in \mathcal{G}$,
	
	\RNum{2}). $\mu(x)=\mu(x^{-1})$ $\forall$ $x\in \mathcal{G}$.
\end{equivalence}

As an equivalence on fuzzy groups Volf in \cite{volf2004counting} studied the following  equivalence relation.
\begin{equivalence}\label{definition:1a}	The fuzzy group $\mu$ is equivalent to another fuzzy group $\nu$ ($\mu\sim\nu$) if for all $x,y\in  X_n$, $\mu(x)>\mu(y)$ if and only if $\nu(x)>\nu(y)$.
	\end{equivalence}
 Equivalence relation \ref{definition:1a} was also used by T{\u{a}}rn{\u{a}}uceanu and Bentea in \cite{tuarnuauceanu2008number} in classification of fuzzy subgroups of finite abelian groups. Makamba in \cite{Makamba1992} also studied the following equivalence relation as an equivalence relation on fuzzy subgroups. 

\begin{equivalence}\label{definition:1*}
	
	The fuzzy group $\mu$ is equivalent to another fuzzy group $\nu$ ($\mu\sim\nu$) if for all $x,y\in  X_n$, 
	
\noindent\RNum{1}).  $\mu(x)>\mu(y)$ if and only if $\nu(x)>\nu(y)$,
	\\\RNum{2}). $\mu(x)=0$ if and only if $\nu(x)=0$.\end{equivalence}

 Subsequently the first author in \cite{murali2005(3)} studied the following equivalence relation on fuzzy sets.

	\begin{equivalence}\label{definition:1}
	
	The fuzzy set $\mu$ is equivalent to another fuzzy subset $\nu$ ($\mu\sim\nu$) if for all $x,y\in  X_n$, 
	\\	\noindent \RNum{1}). $\mu(x)=1$ iff $\nu(x)=1$,
	\\	\noindent\RNum{2}).  $\mu(x)\geq\mu(y)$ if and only if $\nu(x)\geq\nu(y)$,
	\\\RNum{3}). $\mu(x)=0$ if and only if $\nu(x)=0$.	\end{equivalence} 
  
Equivalence relation~\ref{definition:1} was subsequently used by the first author and Makamba in \cite{MuraliMakamba2005(1)} as an equivalence relation on pinned flags. The first author in \cite{Murali2006(2)} used the equivalence relation as an equivalence relation on keychains. In  \cite{kannan2019counting} Kannan and Mohapatra used the equivalence relation as an equivalence relation on fuzzy matrices. Most recently in   \cite{mohapatra2022number} Mohapatra and Hong used the equivalence relation as an equivalence relation on $k$-level fuzzy sets. One notes that viewing the three equivalence relations \ref{definition:1a}, \ref{definition:1*}, and \ref{definition:1} as equivalence relations on fuzzy sets, equivalence relation~\ref{definition:1*} is stronger than equivalence relation~\ref{definition:1a}, and equivalence relation~\ref{definition:1} is stronger than equivalence relation~\ref{definition:1*}. 
The main contribution of this paper is that we propose the following new equivalence relation on fuzzy sets which is stronger than equivalence relation~\ref{definition:1}.

\begin{equiv*}\label{definition:*}For fixed $\{a_0,a_1,\ldots,a_{k+1}\}\in[0,1]$ where $a_0=0$, $a_{k+1}=1$, also  $a_0<a_1<a_2<a_3<\cdots<a_k<a_{k+1}$. Considering the fuzzy sets $\mu:X_n\rightarrow[0,1]\ \backslash\{a_1,\ldots,a_{k}\}$.

	We say $\mu\sim\nu$  if for all $i=0, 1,2,\ldots,k$, and for all $x,y\in X_n$:
	\\\RNum{1}). $\mu(x)=a_{k+1}$ iff $\nu(x)=a_{k+1}$,
	\\\RNum{2}). $a_{i+1}>\mu(x)\geq\mu(y)>a_i$ iff $a_{i+1}>\nu(x)\geq\nu(y)>a_i$,
	\\\RNum{3}). $\mu(x)=a_0$ iff $\nu(x)=a_0$.\end{equiv*}

 In the process of discussing this new equivalence relation we define a new type of keychains and introduce what we name as  $ {\, tight \,\, keychains \,} $,  and their associated pinned flags leading to tight preferential equality and tight fuzzy subsets. Furthermore, we describe tight keychains and kernels of fuzzy subsets and derive some useful properties to apply in the present context.  For the special case $k=0$, the new equivalence relation that we define in this paper becomes equivalence relation~\ref{definition:1}.  Together with interval valued fuzzy subsets, it is useful to study fuzzy subset with membership values in subdivided unit interval by introducing certain fixed numbers, called spikes, in the unit intervals. We require fuzzy subsets to avoid these spikes a membership values. This is akin to a kind of second order fuzzy subsets since membership values restricted to be  in the sub-intervals by a chosen order. For instance when applications received for a number of positions in an institution, some applicants may be termed more suitable than others in the same category. Thus a tightened version of preferential fuzzy subset requires a careful study. It turns out that for all values of $k$ in positive integers the integer sequences for the number of inequivalent pinned flags/inequivalent fuzzy subsets under this new equivalence relation is a well known integer sequence in the literature. It is the same integer sequences for the number of barred preferential arrangements of $X_n$. For barred preferential arrangements see \cite{barred2013,Nkonkobethesis,nkonkobe2020combinatorial,pippenger2010hypercube}.



\section{Image, kernel and ordered partition\label{section:2}}

Suppose $\mu$ is a fuzzy subset of $X_n$. Treating $\mu$ as a function, we follow the usual meaning of image and kernel of a function as applied to $\mu$. Hence, $Im(\mu ) = \{\mu(x) \in [0,1]: x \in X_n\}$ for the $image$  and $ker(\mu) =  \{X_i \subset X_n: i\in Im(\mu)\} $ for the $kernel$ where $X_i =\{x\in X_n: \mu(x) = i, \, i \in Im(\mu)\} $. Then $ker(\mu)$, the kernel of $\mu$ is an ordered partition by virtue of elements in $Im(\mu )$ in the unit interval [0,1]. 	
	So, elements $x_t$ and $x_j$ both belong to $\varpi_i$ if and only if  $\mu(x_t)=\mu(x_j)$. That is, if $Im(\mu)=\{\alpha_i\in I:i=1,2,\ldots f\}$ such that $1\geq\alpha_1>\alpha_2>\cdots>\alpha_f\geq 0$ then $ker(\mu)=\Pi=\{\varpi_1,\varpi_2,\ldots,\varpi_f\}$ where $\varpi_i=\{x\in X_n:\mu(x)=\alpha_i\}$ for $1\leq i\leq f$ i.e. $\mu$=$\bigvee\{\alpha_i\chi_{\varpi_i}:1\leq i\leq f\}$.  Murali in \cite{murali2005(3)} used the same definition for a kernel of a fuzzy set. Gian-Carlo Rota in \cite{rota1964number} also used the same definition for a kernel of a mapping from one set to another. By core of a fuzzy set $\mu$ we mean $core(\mu)=\{x\in X_n:\mu(x)=1\}$. Also, by support of a fuzzy set $\mu$ we mean $supp(\mu)=\{x\in X_n:\mu(x)>0\}$.

\section{THE EXTENDED EQUIVALENCE}

We fix $k$ real numbers ($a_i$'s) in the  interval (0,1) such that $0<a_1<a_2<\cdots<a_k<1$. 
We refer to the nonzero $a_i$'s that are not equal to 1 as \textit{spikes}. A keychain interspersed with spikes is called a $tight \,\, keychain$, in particular a $k$-tight keychain if there are $k$ spikes. Let $I=[0,1]\ \backslash\{a_1,a_2,\ldots,a_k\}$. We denote the set of all fuzzy subsets of $X_n$ by $I^ {X_n}$, where elements of $I^ {X_n}$ are of the form $\mu:X_n\rightarrow[0,1]\ \backslash\{a_1,\ldots,a_{k}\}$.  We refer to the open interval $(a_i,a_{i+1})$ as the $(i+1)^{th}$ interval for all $0\leq i\leq k$. 
\begin{equivalence}[Main Result]\label{definition:2}For fixed $\{a_0,a_1,\ldots,a_{k+1}\}\in[0,1]$ where $a_0=0$, $a_{k+1}=1$, and  $a_0<a_1<a_2<a_3<\cdots<a_k<a_{k+1}$. Fuzzy subsets $\mu,\nu\in I^{X_n}$ are equivalent if and only if for all $i=0, 1,2,\ldots,k$, and for all $x,y\in X_n$:
\\\RNum{1}). $\mu(x)=a_{k+1}$ if and only if $\nu(x)=a_{k+1}$,
 \\\RNum{2}). $a_{i+1}>\mu(x)\geq\mu(y)>a_i$ if and only if $a_{i+1}>\nu(x)\geq\nu(y)>a_i$,
\\\RNum{3}). $\mu(x)=a_0$ if and only if $\nu(x)=a_0$.\end{equivalence}

We denote by $F_n(k)$ the number of inequivalent fuzzy subsets of $X_n$ under equivalence relation~\ref{definition:2}.  Note that for the case $k=0$, equivalence relation ~\ref{definition:2} becomes equivalence relation~\ref{definition:1}. From now on we will refer to equivalence relation ~\ref{definition:2} as the extended equivalence. 

 \begin{definition}[Tight-$\alpha$-cut] we define the tight\\ $\alpha$-cut $\mu^{(R_j>R_i)}=\{x\in X_n: R_j>\mu(x)>R_i \}$.
\end{definition}
From now on when we talk about an $\alpha$-cut we mean a tight $\alpha$-cut.
   The case $k=0$ in equivalence relation \ref{definition:2} has been studied.  We now discuss the case $k>0$. There are three cases to consider. Based on the extended equivalence relation~\ref{definition:2} for each fuzzy subset the spikes induce disjoint intervals, and on  intervals the membership values form a keychain.  We denote by $\alpha_1$ the largest pin of the keychain, and by $\alpha_r$ the smallest pin. We now consider the different cases.

\RNum{1}). Membership values in the interval $(a_k,1]$, and $\alpha_1=1=a_{k+1}$. In this case the keychains are of the form $\ell:a_{k+1}=1=\alpha_1>\alpha_2>\cdots>\alpha_r>a_k$, and the tight $\alpha$-cuts are of the form $\{x\in X_n: 1\geq\mu(x)>a_k \}$. Then the $\alpha$-cut of $\mu$ is equal to $\varpi_1$ for all $1\geq \alpha >\alpha_2$, and is equal to $\cup _{j=1}^t\varpi_j$ for $\alpha_t\geq \alpha>\alpha_{t+1}$, where $2\leq t\leq r$ and $\alpha_{r+1}=a_k$.  

\RNum{2}).  Membership values are in an interval of the form $(a_{i},a_{i+1})$. Then the keychains are of the form $\ell:a_{i+1}>\alpha_1>\alpha_2>\cdots>\alpha_r>a_i$, and the tight $\alpha$-cuts are of the form $\{x\in X_n: a_{i+1}>\mu(x)>a_i \}$. Then the $\alpha$-cut of $\mu$ is $\varpi_1$ for all $a_{i+1}>\alpha>\alpha_2$, and $\cup _{j=1}^t\varpi_j$ for all $\alpha_t\geq \alpha>\alpha_{t+1}$ for all $2\leq t\leq r$,  where $\alpha_{r+1}=a_i$.

\RNum{3}).  Membership values in the interval $[0,a_{1})$ and $\alpha_r=0$. Then the keychains are of the form $\ell:a_{1}>\alpha_1>\alpha_2>\cdots>\alpha_r=0$, and the tight $\alpha$-cuts are of the form $\{x\in X_n: a_{1}>\mu(x)\geq 0 \}$. Then the $\alpha$-cut of $\mu$ is equal to $\varpi_1$ for all $a_{1}>\alpha>\alpha_2$, and $\cup _{j=1}^t\varpi_j$ for all $\alpha_t\geq \alpha>\alpha_{t+1}$, where $2\leq t\leq r-1$.

The argument used in the three cases above is a generalization of the one given by the first author in \cite{murali2005(3)}. In the three cases above for a fixed fuzzy set the same $\varpi_i's$ discussed in different cases to be understood as meaning different subsets of $X_n$ as the membership values are in different intervals. From the cases above it is clear that for membership values of a fuzzy subset $\mu$ that are within the open interval created by two consecutive spikes, we have $ker(\mu)=\Pi$. One note that the same $\Pi$ can be a kernel of several fuzzy subsets. However, the extended equivalence relation takes care of that. We denote by $F_{n,2}(k)$ (and their set by $\mathcal{F}_{n,2}(k)$) the number of inequivalent fuzzy subsets under the extended equivalence relation, where $k$ is the number of spikes ($a_i's$) in the unit interval exclusively between 1 and 0 in accordance with equivalence relation~\ref{definition:2}. We denote by  $F_{n,1}(k)$ (and their set by $\mathcal{F}_{n,1}(k)$) the number of equivalence classes of fuzzy subsets on an interval between a spike and one of the boundary points (either 1 or 0 is the boundary point) i.e. the membership values are in the interval $[0,a_1)$  or are in the interval $(a_k,1]$, the $a_i's$ are spikes. Denote by $F_{n,0}(k)$ (and their set by $\mathcal{F}_{n,0}(k)$) the number of inequivalent fuzzy subsets of $X_n$ under the generalized  equivalence in equivalence relation~\ref{definition:2} where memberships values exclude both boundary points meaning exclude both 1 and 0, that is being in an interval of the form  $(a_i,a_{i+1})$. In this case the number of inequivalent fuzzy sets is the same as the number of ordered partitions as previously noted by Murali in \cite{Murali2006(3)}. It turns out that when considering all the open $k+1$ intervals created by the $k$ spikes the number of inequivalent fuzzy subsets in this special case under the extended equivalence relation is equal to number of barred preferential arrangements studied by the second author in chapter~2 of \cite{Nkonkobethesis}, and studied by Ahlbach et al in \cite{barred2013}. Where the spikes may be interpreted as the bars. We arrive at this fact using the following argument. For each open  interval of the form  $(a_i,a_{i+1})$ on the unit interval there are $r!$ equivalence classes of fuzzy subsets which are associated with the same partition where $r$ is the number of blocks in kernel. So, one can think of the kernel of fuzzy subset of $X_n$ having $r$ blocks as partition of $X_n$, hence, is enumerated by the stirling numbers of the second kind (denoted by $S(n,r)$) (see~\cite{murali2005(3)}).  Furthermore, in accordance with equivalence relation~\ref{definition:2}  taking into account all the $k$ spikes, since all the elements of $X_n$ can have membership values on the open intervals in-between any of the $k$ spikes, and before the first spike and after the $kth$ spike and some of the open intervals between consecutive spikes may be empty, then there are $\binom{k+r}{r}r!$ equivalence classes of fuzzy subsets associated with the same kernel. Thus, \begin{equation}\label{equation:6}F_{n,0}(k)=\sum\limits_{r=0}^{n}\binom{k+r}{r}r!S(n,r),\end{equation}
this is in agreement with Theorem~3 of 
\cite{barred2013}.

\begin{theorem}\label{theoreom:3}For $n\geq 1$ and $k\geq1$:
\begin{equation}\label{equation:5}
	F_{n,2}(k)=4 F_{n,0}(k)-3F_{n,0}(k-1)-\sum\limits_{i=1}^{n}\binom{n}{i}F_{n-i,0}(k-1).
\end{equation}

\end{theorem}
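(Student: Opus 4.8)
The plan is to relate $F_{n,2}(k)$, the count of inequivalent fuzzy subsets whose membership values live in a two-sided interval (i.e. an interval $(a_k,1]$ or $[0,a_1)$ bounded by one spike and one of the endpoints $0$ or $1$ — strictly speaking $F_{n,2}$ counts those that \emph{use} a boundary point), to the boundary-free counts $F_{n,0}(k)$ via an inclusion–exclusion over which elements are pinned to the boundary value. First I would recall the two base enumerations established earlier in the excerpt: $F_{n,0}(k)=\sum_{r=0}^n\binom{k+r}{r}r!\,S(n,r)$ from \eqref{equation:6}, and the observation that $F_{n,0}$ counts inequivalent fuzzy subsets whose values avoid both $0$ and $1$ across the $k+1$ open intervals. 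I would then set up the key decomposition: a fuzzy subset counted by $F_{n,2}(k)$ is determined by (a) the subset $S\subseteq X_n$ of elements attaining the boundary value (the value $1$ in case \RNum{1}, the value $0$ in case \RNum{3}), and (b) an inequivalent boundary-free configuration on $X_n\setminus S$ using the remaining $k$ open intervals and the one half-open-but-now-effectively-open interval adjacent to the used endpoint. Summing over $|S|=i$ gives a raw expression of the shape $\sum_i \binom{n}{i}(\text{something with }F_{n-i,0})$, and the combinatorial content is to identify that ``something'' correctly and then collapse the double sum using a Vandermonde-type identity on the $\binom{k+r}{r}$ factors.

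The second ingredient is to handle both boundary cases (the $(a_k,1]$ interval and the $[0,a_1)$ interval) and the mixed contributions symmetrically, being careful about double counting the configuration where the boundary-point-set $S$ is empty — that case is precisely a boundary-free configuration and must be reconciled against $F_{n,0}(k)$ versus $F_{n,0}(k-1)$ (one fewer spike, because one of the $k$ spikes is ``absorbed'' as the new effective boundary once we commit to not using the endpoint). I expect the coefficients $4$, $-3$, $-1$ in \eqref{equation:5} to emerge from: two boundary choices each contributing a copy of $F_{n,0}(k)$-like terms, plus corrections. Concretely I would write $F_{n,2}(k) = 2\big(F_{n,0}(k) + (\text{contribution from the used endpoint})\big) - (\text{overcount of the all-interior / endpoint-unused configurations})$, expand each endpoint contribution via the $\binom{n}{i}$ sum, and then verify that after applying $\sum_r\binom{k+r}{r}r!S(n,r)$ termwise and using $\binom{k+r}{r}=\binom{k-1+r}{r}+\binom{k-1+r}{r-1}$ the telescoping produces exactly $4F_{n,0}(k)-3F_{n,0}(k-1)-\sum_{i=1}^n\binom{n}{i}F_{n-i,0}(k-1)$.

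A cleaner alternative I would pursue in parallel is a direct bijective / generating-function argument: encode an $F_{n,2}(k)$-object as a barred preferential arrangement of $X_n$ with $k$ bars in which one designated outermost block is allowed to be ``collapsed'' onto a boundary (corresponding to the endpoint value), then translate the three structural cases \RNum{1}--\RNum{3} of the preceding discussion into operations on barred arrangements and read off \eqref{equation:5} from the known recurrences for barred preferential arrangements in \cite{barred2013,Nkonkobethesis}. Either way, the main obstacle I anticipate is \emph{bookkeeping the boundary block correctly}: making sure that a fuzzy subset which uses, say, the value $1$ on some elements and otherwise stays in $(a_k,1)$ is counted once and only once, that the empty-boundary-block case is assigned to the right $F_{n,0}$, and that the shift $k\mapsto k-1$ is applied exactly where one spike is consumed by the boundary — these sign/coefficient issues are what force the precise constants $4,-3,-1$, and getting them to line up (rather than an off-by-one variant) is the crux of the proof.
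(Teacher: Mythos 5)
Your overall strategy --- expressing $F_{n,2}(k)$ through the boundary-free counts $F_{n,0}(\cdot)$ and correcting for over- or under-counting at the endpoints $0$ and $1$ --- is the same family of argument the paper uses, but as written the proposal has a genuine gap: the step that produces the coefficients $4$, $-3$, $-1$ is precisely the step you defer (``getting them to line up \ldots\ is the crux''), and the mechanism you sketch for the leading term does not produce it. You propose $F_{n,2}(k)=2\bigl(F_{n,0}(k)+\cdots\bigr)-(\cdots)$, i.e.\ a factor $2$ from ``two boundary choices each contributing a copy.'' The paper's factor $4$ arises differently: each class in $\mathcal{F}_{n,0}(k)$ spawns $2\times 2$ candidates in $\mathcal{F}_{n,2}(k)$ by \emph{independently} deciding (i) whether the topmost block sits at the value $1$ or strictly inside $(a_k,1)$, and (ii) whether the bottommost block sits at $0$ or strictly inside $(0,a_1)$; the two copies of $-F_{n,0}(k-1)$ correct for the classes whose extreme interval $(a_k,1)$, respectively $(0,a_1)$, is empty (so that one of the two binary choices is vacuous), and the remaining subtracted sum is the both-extremes-empty correction, which the paper identifies as $F_{n,1}(k-1)=\sum_{i}\binom{n}{i}F_{n-i,0}(k-1)$. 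None of these identifications appears in your write-up. A smaller but real issue: you describe $F_{n,2}(k)$ as counting classes that ``use a boundary point,'' whereas it counts \emph{all} equivalence classes under the extended equivalence, boundary values being optional; that distinction governs where the empty-boundary-set case must be charged.

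That said, the first decomposition you propose --- summing over the set $S$ of elements pinned to a boundary value --- does lead to a complete, and arguably cleaner, proof if you push it through. Since under the equivalence only the top block can carry the value $1$ and only the bottom block the value $0$, choosing $j$ elements for the boundaries and distributing each to the top or the bottom gives $F_{n,2}(k)=\sum_{j=0}^{n}\binom{n}{j}\,2^{\,j}F_{n-j,0}(k)$, whose exponential generating function is $e^{2t}/(2-e^t)^{k+1}$ because \eqref{equation:6} gives $\sum_n F_{n,0}(k)t^n/n!=1/(2-e^t)^{k+1}$ (this is \eqref{equation:8}). The elementary identity $e^{2t}=4-(2+e^t)(2-e^t)$ then converts this at once into $4F_{n,0}(k)-3F_{n,0}(k-1)-\sum_{i\geq 1}\binom{n}{i}F_{n-i,0}(k-1)$, which is \eqref{equation:5}. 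If you supply that computation (or the equivalent binomial manipulation on the $\binom{k+r}{r}$ factors), your route is valid and bypasses the delicate overcount bookkeeping of the paper's argument; without it, the proposal does not yet establish the theorem.
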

\begin{proof} We construct the inequivalent fuzzy sets of $\mathcal{F}_{n,2}(k)$ under the equivalence relation in equivalence relation~\ref{definition:2} by starting with the set $\mathcal{F}_{n,0}(k)$. A property of the elements of $\mathcal{F}_{n,2}(k)$ is that on the interval $(a_k,1]$ the first block (from left to right) has membership value 1 (which is the core of the fuzzy set) or the first block has membership value strictly less than one (the core of the fuzzy sets in empty). Similarly the last block (from left to right) on each element of $\mathcal{F}_{n,2}(k)$  on the interval $[0,a_1)$ the membership value of the last block is 0 (fuzzy set and its support are equal) or the membership value of the last block is strictly greater than 0 (support is a proper subset of fuzzy set).  So, this gives rise to $4|\mathcal{F}_{n,0}(k)|$. However, there is over counting,  $4|\mathcal{F}_{n,0}(k)|>|\mathcal{F}_{n,2}(k)|$. First reason for over counting is that on some of the elements of $\mathcal{F}_{n,0}(k)$ the first interval is empty i.e. none of the blocks have membership value in the interval $(a_k,1]$. So, for such elements it does not make sense to say the first block in the interval $(a_k,1]$ having membership of either 1 or strictly less than 1.  So, how many element of $\mathcal{F}_{n,0}(k)$ are having this property? From definition there are  $|\mathcal{F}_{n,0}(k-1)|$. Similarly, there are $|\mathcal{F}_{n,0}(k-1)|$ elements that are having the property that $[0,a_1)$ is empty.
	
	 The other class of fuzzy subsets that results into over-counting when on elements of $\mathcal{F}_{n,0}(k)$ both the intervals are empty $(0,a_1)$ and $(a_k,1)$. How many elements of $\mathcal{F}_{n,0}(k)$ are having this property? On this case the number of resultant inequivalent fuzzy sets of $\mathcal{F}_{n,2}(k)$ if $(0,a_1)$  is the one which is empty then the membership values of the resultant inequivalent fuzzy subsets would be all strictly greater than zero. The number inequivalent fuzzy subsets would be $F_{n,1}(k-1)$ from definition, where some elements may have membership degree of 1.  We can obtain the same number $F_{n,1}(k-1)$ by looking at what is the membership value of the first block of each fuzzy subset in this scenario. The membership value of the first block of each fuzzy subset is either 1 or a number in the interval $(a_k,1)$. We can choose the first block of $X_n$ in $\binom{n}{i}$ ways. The degrees of membership of the other $n-i$  are on the other open intervals and are all strictly greater than zero, from definition there are $F_{n,0}(k-1)$, one note that the interval $(0,a_1)$ is empty. Summing over $i$ we have $\sum\limits_{i=0}^{n}\binom{n}{i}F_{n-i,0}(k-1)=F_{n,1}(k-1)$. If the interval $(a_k,1)$ instead of $(0,a_1)$, a similar argument can be used to prove that the number of inequivalent fuzzy subsets is still $F_{n,1}(k-1)$ in this case.
Thus, 	$F_{n,2}(k)=4 F_{n,0}(k)-2F_{n,0}(k-1)-\sum\limits_{i=0}^{n}\binom{n}{i}F_{n-i,0}(k-1)$.

\end{proof}

Theorem~\ref{theoreom:3} above is a generalization of Theorem~3.5 of the first author's paper \cite{Murali2006(3)}.
\begin{theorem}For $n\geq0$,
	\begin{equation}\label{equation:7}
		F_{n+1,2}(k)=2F_{n,2}(k)+(k+1)\sum\limits_{i=0}^n\binom{n}{i}F_{n-i,2}(k+1).
	\end{equation}
\end{theorem}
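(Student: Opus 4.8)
The plan is to classify the equivalence classes of fuzzy subsets $\mu$ of $X_{n+1}$ according to the value $\mu(x_{n+1})$, using the disjoint decomposition
$$[0,1]\setminus\{a_1,\ldots,a_k\}\;=\;\{a_0\}\;\sqcup\;\{a_{k+1}\}\;\sqcup\;\bigsqcup_{i=0}^{k}(a_i,a_{i+1}),$$
where $a_0=0$, $a_{k+1}=1$, and each $(a_i,a_{i+1})$ is open (hence disjoint from $\{0\}$ and $\{1\}$). The two ``boundary'' cases will produce the summand $2F_{n,2}(k)$, and the $k+1$ open-interval cases will produce the double sum.

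First I would treat $\mu(x_{n+1})=a_{k+1}=1$. Restriction $\mu\mapsto\mu|_{X_n}$ sends the equivalence classes of such $\mu$ into $\mathcal F_{n,2}(k)$; inspecting clauses (I)--(III) of equivalence relation~\ref{definition:2} one checks that for $\mu_1,\mu_2$ on $X_{n+1}$ both sending $x_{n+1}$ to $1$ one has $\mu_1\sim\mu_2$ if and only if $\mu_1|_{X_n}\sim\mu_2|_{X_n}$ — the extra element causes no trouble since every instance of clause (II) involving a value $1$ is vacuous. As each class on $X_n$ is the restriction of its extension by $x_{n+1}\mapsto 1$, this is a bijection. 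The case $\mu(x_{n+1})=a_0=0$ is symmetric, and together the two boundary cases contribute $2F_{n,2}(k)$.

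The heart of the proof is the case $\mu(x_{n+1})\in(a_{i_0},a_{i_0+1})$ for a fixed $i_0\in\{0,1,\ldots,k\}$, which accounts for the factor $k+1$. Given such a $\mu$, set $v=\mu(x_{n+1})$, let $S=\{x\in X_n:\mu(x)=v\}$ be the rest of the block of $x_{n+1}$, and let $i=|S|$. Then $\mu|_{X_n\setminus S}$ avoids every value in $\{a_1,\ldots,a_k,v\}$, so it is a fuzzy subset of an $(n-i)$-element set avoiding $k+1$ spikes. I claim that $\mu\mapsto\bigl(i_0,\,S,\,[\mu|_{X_n\setminus S}]\bigr)$ induces a bijection between the equivalence classes with $\mu(x_{n+1})\in(a_{i_0},a_{i_0+1})$ and the triples consisting of $i_0$, a subset $S\subseteq X_n$, and an element of $\mathcal F_{n-|S|,2}(k+1)$. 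Granting the claim, summing over $i_0$ and over $S$ (with $\binom ni$ subsets of size $i$) gives $(k+1)\sum_{i=0}^{n}\binom ni F_{n-i,2}(k+1)$, and adding the boundary contribution proves the theorem.

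The main obstacle is establishing that claim, i.e.\ that the equivalence class of $\mu$ on $X_{n+1}$ records exactly $(i_0,S,[\mu|_{X_n\setminus S}])$. For well-definedness, $\mu_1\sim\mu_2$ forces the same $i_0$ immediately, forces $S_1=S_2$ because ``$\mu(x)=\mu(x_{n+1})$'' is the conjunction of the two clause-(II) assertions $a_{i_0+1}>\mu(x)\ge\mu(x_{n+1})>a_{i_0}$ and $a_{i_0+1}>\mu(x_{n+1})\ge\mu(x)>a_{i_0}$, and forces $\mu_1|_{X_n\setminus S}\sim\mu_2|_{X_n\setminus S}$ relative to the $k+1$ spikes $a_1<\cdots<a_{i_0}<v<a_{i_0+1}<\cdots<a_k$ — the only nonroutine point being control of the two new sub-intervals $(a_{i_0},v)$ and $(v,a_{i_0+1})$, obtained by rewriting the clauses $v>\mu(x)\ge\mu(y)>a_{i_0}$ and $a_{i_0+1}>\mu(x)\ge\mu(y)>v$ in terms of comparisons of $\mu(x),\mu(y)$ with $\mu(x_{n+1})$ inside the $(i_0{+}1)$-th interval and applying clause (II) to $\mu_1$ and $\mu_2$. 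For the inverse, one extends a representative $\tilde\mu$ of a class in $\mathcal F_{n-|S|,2}(k+1)$ to $X_{n+1}$ by giving $x_{n+1}$ and every element of $S$ the common value occupied by the $(i_0{+}1)$-th spike of $\tilde\mu$, and verifies this is well defined on classes and mutually inverse to the first map. The degenerate subcases — $S=X_n$, where the restriction is the empty fuzzy subset and $F_{0,2}(k+1)=1$, and the new spike $v$ lying above or below all values of the restriction inside the $(i_0{+}1)$-th interval — need no special treatment, since the $(k+1)$-spike equivalence handles them uniformly.
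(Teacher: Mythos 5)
Your proposal is correct and follows essentially the same route as the paper's own proof: split according to whether $\mu(x_{n+1})$ is a boundary value ($0$ or $1$), giving $2F_{n,2}(k)$, or lies in one of the $k+1$ open intervals, in which case you choose the interval, choose the rest of the block of $x_{n+1}$ in $\binom{n}{i}$ ways, and treat its common value as an extra spike so that the remaining $n-i$ elements contribute $F_{n-i,2}(k+1)$. Your write-up is in fact more careful than the paper's about why this correspondence is a genuine bijection on equivalence classes, but the underlying decomposition is identical.
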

\begin{proof}
	There are two cases to consider. 
	Case 1: $(n+1)$ is on one of the boundary points on the interval i.e the the membership value of the block having $(n+1)$ is either 1 or 0. The total number of equivalence classes of the membership values of the other $n$  elements is $F_{n,2}(k)$. Hence, the number of inequivalent fuzzy sets  values of the $n+1$ elements satisfying the conditions of equivalence relation~\ref{definition:2} in this case is $2F_{n,2}(k)$.
	
		Case 2: $(n+1)$ is on a block having membership value that is in $(0,1)\backslash\{a_1,a_2,\ldots,a_k\}$.
	 In this case there are $(k+1)$ open interval all of the form $(a_i,a_{i+1})$. One of the intervals can be chosen in $\binom{k+1}{1}$ ways. The other elements that are on the same block as $(n+1)$ can be chosen in $\binom{n}{i}$ ways. Since the $n-i$ other elements all belong to blocks that are either before or after the block having $(n+1)$ then in addition to the $k$, $a_i's$ the membership value of the block containing $(n+1)$ may be regarded as the $(k+1)th$ point on the interval $[o,1]$ that any membership value of the formed blocks of the $(n-i)$ other elements may not have, thus the number of possible inequivalent equivalence classes for the $n-i$ elements with this property is $F_{n-i,2}(k+1)$. Thus, the total number of inequivalent fuzzy subsets of $X_{n+1}$ in this case is $(k+1)\sum\limits_{i=0}^n\binom{n}{i}F_{n-i,2}(k+1)$.

\end{proof}

The second author and co authors in a different context on Theorem 2.4 of \cite{nkonkobeetal2020(1)} discussed a generalization of the identity on \eqref{equation:7}.
\begin{theorem}For $n\geq0$,\label{theorem:t}
	\begin{equation}
		F_{n,2}(k)=\sum\limits_{i=0}^n\binom{n}{i}\begin{bmatrix}\sum\limits_{l=0}^il!S(i,l)\binom{k+l}{l}\end{bmatrix}2^{n-i}.
	\end{equation}
	
\end{theorem}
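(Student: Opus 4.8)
The plan is to reduce the statement to a transparent bijective identity and then prove that identity directly. By formula~\eqref{equation:6} with $n$ and $r$ replaced by $i$ and $l$, the bracketed sum is nothing but $F_{i,0}(k)$, so the assertion to be proved is
\[
F_{n,2}(k)=\sum_{i=0}^{n}\binom{n}{i}F_{i,0}(k)\,2^{\,n-i}.
\]
First I would record that for every $\mu\in I^{X_n}$ the set $S_\mu:=\{x\in X_n:0<\mu(x)<1\}$ depends only on the extended-equivalence class of $\mu$: indeed clauses \RNum{1}) and \RNum{3}) of equivalence relation~\ref{definition:2} say exactly that $core(\mu)=\mu^{-1}(a_{k+1})$ and $X_n\setminus supp(\mu)=\mu^{-1}(a_0)$ are preserved, and since $\mu$ avoids the spikes $a_1,\dots,a_k$ we have the disjoint decomposition $X_n=S_\mu\sqcup\mu^{-1}(1)\sqcup\mu^{-1}(0)$.

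Next I would partition $\mathcal{F}_{n,2}(k)$ according to the value $i=|S_\mu|$ and exhibit, for each $i$, a bijection between the classes with $|S_\mu|=i$ and triples consisting of: an $i$-element subset $S\subseteq X_n$ (chosen in $\binom{n}{i}$ ways); a function $X_n\setminus S\to\{a_0,a_{k+1}\}=\{0,1\}$ (chosen in $2^{n-i}$ ways, different functions yielding inequivalent classes by \RNum{1}) and \RNum{3})); and an extended-equivalence class of a fuzzy subset of $S$ whose values all lie in the open intervals $(a_j,a_{j+1})$, of which there are $F_{i,0}(k)$, since on such fuzzy subsets clauses \RNum{1}) and \RNum{3}) are vacuous and only \RNum{2}) survives, which is precisely the relation counted by~\eqref{equation:6}. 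Summing $\binom{n}{i}\,2^{\,n-i}\,F_{i,0}(k)$ over $i$ then gives the formula.

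The one point that genuinely needs to be checked --- and the only real obstacle --- is that the three pieces of data above decouple, i.e.\ that no instance of clause \RNum{2}) ever links a \emph{boundary} element (one with $\mu(x)\in\{0,1\}$) to an \emph{interior} element of $S_\mu$. This is exactly where the strict inequalities in \RNum{2}) are used: if $\mu(x)=a_{k+1}=1$ then $a_{i+1}>\mu(x)$ fails for every $i\le k$, and if $\mu(x)=a_0=0$ then $\mu(x)>a_i$ fails for every $i\ge 0$ (and likewise with $x$ replaced by $y$), so any pair satisfying the hypothesis of \RNum{2}) consists of two interior elements. Hence the extended equivalence on $X_n$ factors as the product of the discrete $0/1$ data on $X_n\setminus S_\mu$ with the extended equivalence on $S_\mu$; this makes both the forward map and its inverse (pick any representative on $S$, glue in the prescribed boundary values on the complement) well defined, completing the bijection.

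Finally, as a sanity check one may translate into exponential generating functions: \eqref{equation:6} gives $\sum_{n\ge0}F_{n,0}(k)\,x^n/n!=(2-e^{x})^{-(k+1)}$, so the identity above asserts $\sum_{n\ge0}F_{n,2}(k)\,x^n/n!=e^{2x}(2-e^{x})^{-(k+1)}$, which is readily checked to be consistent with Theorem~\ref{theoreom:3} and with~\eqref{equation:7}; alternatively, substituting~\eqref{equation:6} into Theorem~\ref{theoreom:3} reduces the whole statement to a one-parameter recurrence identity for barred preferential arrangements, giving a purely algebraic second proof.
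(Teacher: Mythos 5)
Your proposal is correct and follows essentially the same decomposition as the paper's own proof: split $X_n$ into the elements with boundary membership values $\{0,1\}$ (contributing the binomial coefficient and the power of $2$) and those with interior values (contributing $F_{i,0}(k)=\sum_{l}l!\,S(i,l)\binom{k+l}{l}$ via \eqref{equation:6}), then sum over the size of one of the two parts. Your version is somewhat more careful than the paper's in that you explicitly verify that clause \RNum{2}) of equivalence relation~\ref{definition:2} never couples a boundary element to an interior one, so the two pieces of data genuinely decouple; this is a worthwhile addition but not a different method.
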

\begin{proof}
	The membership values of the formed blocks of the fuzzy sets of elements of $X_n$ under the equivalence relation in equivalence relation~\ref{definition:2} can be classified as being in one of two places. The membership value of each block is either at a boundary point ( membership value 1 or 0) or at a non-boundary point in the open interval (0,1).    The number of elements whose formed blocks are at boundary points can be chosen in $\binom{n}{i}$. The $i$ elements can occupy the boundary points in $2^i$ ways. By equivalence relation~\ref{equation:6} the remaining $n-i$ elements can be arranged on the open intervals $(0,1)\backslash\{a_1,a_2,\ldots,a_k\}$ in $F_{n-i,0}(k)$ ways.  
\end{proof}

We now compare equivalence relation~\ref{definition:2} to equivalence relation~\ref{definition:1}.  Amongst the equivalence relations we have discussed, we have chosen to compare \ref{definition:2} to \ref{definition:1} because among the three equivalence relations (equivalence relations \ref{definition:1a}, \ref{definition:1*}, and \ref{definition:1})  \ref{definition:1} is the strongest. Without loss of generality we will use $X_1$ in out running example. In \cite{MuraliMakamba2005(1)} Murali and Makamba discussed possible distinct keychains of a given length that can be used in forming pinned flags. For instance, keychains of length 2, there are 3 distinct keychains of length 2 which are $11$, $1\lambda$ (where $0<\lambda<1$), and $10$. These are keychains that can be used to find distinct pinned flags/inequivalent fuzzy subsets of $X_1=\{1\}$. These distinct pinned flags/inequivalent fuzzy subsets are  $X_0^1\subset X_1^1$, $X_0^1\subset X_1^\lambda$, and $X_0^1\subset X_1^0$ based on equivalence relation~\ref{definition:1}. However, based on  equivalence relation~\ref{definition:2} the number of distinct fuzzy subsets depends on the value of $k$ even for a singleton set. For instance, say $k=2$, meaning we have 2 spikes in the unit interval. Say $a_1=0.25$, and $a_2=0.75$. Now, the keychains of length 2 are $11$, $1\lambda^1$, $1\lambda^2$,$1\lambda^3$, and $10$, where $0<\lambda^1<0.25$, $0.25<\lambda^2<0.75$, and $0.75<\lambda^3<1$. So, there are five distinct keychains. Hence, some of the inequivalent fuzzy subsets are of the form $X_0\subset X_1^{\lambda_i}$, the superscript  in the $1\lambda^i$'s refers to position of spikes. In general $X_1$ has $k+3$ distinct fuzzy subsets, where $k$ is the number of spikes, and is a non-negative integer. We have seen that fuzzy subsets based on the three keychains $1\lambda^1$, $1\lambda^2$,$1\lambda^3$ are all distinct based on equivalence relation~\ref{definition:2}, however the same fuzzy subsets are all equivalent based on equivalence relation \ref{definition:1}. In general keychains used in equivalence relation ~\ref{definition:2} are of the form  $\ell:=1=\lambda_0\geq\lambda_1^k\geq\cdots\geq\lambda_{n_k}^k>a_k>\lambda_1^{k-1}\geq\cdots\geq\lambda_{n_{k-1}}^{k-1}>a_{k-1}>\cdots >a_1>\cdots \geq0$, where the position of spikes is indicated.

We now compare number of inequivalent fuzzy subsets Based on  equivalence relation~\ref{definition:2} and equivalence relation~\ref{definition:1} for more values of $n$ in the form of a table.

	\begin{table}[h!]\caption{A comparison of two equivalence relations.}
	\begin{tabular}{|c|c|c|}\hline
		\thead{\makecell{The set}}&\thead{Number of \\inequivalent fuzzy subsets\\using equivalence relation~\ref{definition:1}\\ This is the same\\ integer sequence \\as that of number of chains\\ in the power set\\ of $X_n$(see~\cite{nelsen1991chains}) }&	\thead{Number of\\ inequivalent\\ fuzzy subsets\\using \\ equivalence\\ relation~\ref{definition:2}}  \\ \hline
		$X_1$&\makecell[l]{Based on the keychains\\$11$, $1\lambda$, and $10$, there are \\three inequivalent\\ fuzzy subsets:\\ 1). $X_0^1\subset X_1^1$,\\ 2). $X_0^1\subset X_1^\lambda$, where $0<\lambda<1$\\ 3). $X_0^1\subset X_1^0$}&\makecell[l]{ For argument sake we suppose\\ that $k=3$(number of spikes).\\ Say we have $a_1=0.3$, $a_2=0.5$,\\ and $a_3=0.8$. \\so we have membership values \\in $[0,1]\backslash\{0.3,0.5,0.8\}$\\ The inequivalent fuzzy subsets are
			\\ 1). $X^1_0\subset X_1^1$\\2). $X^1_0\subset X_1^{\lambda_4}$ where $0.8<\lambda_4<1$\\3). $X^1_0\subset X_1^{\lambda_3}$ where $0.5<\lambda_3<0.8$\\4). $X^1_0\subset X_1^{\lambda_2}$ where $0.3<\lambda_2<0.5$
			\\5). $X^1_0\subset X_1^{\lambda_1}$ where $0<\lambda_1<0.3$
			\\6). $X^1_0\subset X_1^0$\\ Hence, there are six inequivalent\\ fuzzy sets for $X_1.$}\\\hline
		$X_2$& \makecell[l]{Using Theorem~\ref{theorem:t} the number \\of inequivalent fuzzy subsets is 11.} &\makecell{Using Theorem~\ref{theorem:t} \\for $k=3$ the number \\of inequivalent fuzzy subsets is 44.}\\\hline
	$X_3$	&\makecell[l]{Using Theorem~\ref{theorem:t} the number \\of inequivalent fuzzy subsets is 51.}&\makecell[l]{Using Theorem~\ref{theorem:t} for $k=3$\\the number of inequivalent \\fuzzy subsets is 384.}\\\hline

	\end{tabular}
\end{table}

\newpage\section{Asymptotics}
In this section, we propose  some asymptotic results for the numbers $F_{n,2}(k)$. The results that we discuss are based on a method developed in the papers \cite{hsu1990power,hsu1991kind}. In \cite{nkonkobe2020combinatorial} this method has been applied to the generating function \begin{equation}\label{equation:300}\frac{(1+\alpha t)^{\gamma/\alpha}}{[1-x[(1+\alpha t)^{\beta/\alpha}-1]]^k}=\sum\limits_{n=0}^\infty T^{k,x}_n(n;\alpha,\beta,\gamma)\frac{t^n}{n!}.\end{equation} The method has also been used in  \cite{adell2023unified,corcino2022second,hsu1998unified}. We have (see~\cite{barred2013}), \begin{equation}\label{equation:8}
	\sum\limits_{n=0}^\infty F_{n,0}(k)\frac{t^n}{n!}=\frac{1}{(2-e^t)^{k+1}}.  
\end{equation} The polynomials in \eqref{equation:8} are a special case of the polynomials studied in \cite{nkonkobe2020combinatorial}. For $n\in\mathbb{N}$,  we suppose $k_1+2k_2+\cdots+nk_n=n$, and $k=k_1+k_2+\cdots+k_n$ is the number of parts of the partition $1^{k_1}2^{k_2}\cdots n^{k_n}$. Let $u\in\mathbb{C}$ and $n\in\mathbb{N}$. Let $(u)_n$ denote the product $u(u-1)(u-2)\cdots(u-n+1)$, where $(u)_0=1$. We also let  $\sigma(n)$ denote the set of partitions of the integer $n\in \mathbb{N}$ and  $\sigma(n,k)$ denote the set of partitions of $n\in \mathbb{N}$ that are having $k$ parts.  We let  $\tau(t)$ denote the formal power series $\sum\limits_{n=0}^\infty a_nt^n$ over $\mathbb{C}$ with $\tau(0)=a_0=1$. We suppose that, for any $\beta\in \mathbb{C}$ with $\beta\not= 0$, we have a formal power series

\begin{equation}
	\eta(t)=(\tau(t))^\beta=\sum_{n=0}^\infty {\beta\brace n}t^n,\;
\end{equation} such that ${ \beta \brace n}=[t^n]\eta(t)$ and ${\beta\brace 0}=1$. Then, for $\epsilon\in \mathbb{C}$ with $\epsilon\not=0$ we have

\begin{equation}\frac{1}{(\epsilon)_n}\left[t^n\right]\left(\eta(t)\right)^{\epsilon}=\frac{1}{(\epsilon)_n}{\beta \epsilon\brace n}=\sum_{j=0}^s\frac{W(n,j)}{(\epsilon-n+j)_j}\;\;+o\left(\frac{W(n,s)}{(\epsilon-n+s)_s}\right),\end{equation}
where $W(n,i)=\sum\limits_{\sigma(n,n-i)}\frac{a_1^{k_1}a_2^{k_2}\cdots a_n^{k_n}}{k_1!k_2!\cdots k_n!}$, $0\leq i<n$,  and $n=o(\sqrt{|\epsilon|})$ as  $|\epsilon|\rightarrow \infty$.

\begin{theorem}\label{theorem:30} For all  $n\geq0$, we have

	\begin{equation}
		\frac{{\beta \epsilon\brace n}}{(\epsilon)_n}=\sum\limits_{j=0}^q\frac{W(n,j)}{(\epsilon-n+j)_j}+o\begin{pmatrix}
			\frac{W(n,q)}{(\epsilon-n+q)_q}
		\end{pmatrix}.
	\end{equation}

\end{theorem}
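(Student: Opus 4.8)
The plan is to recognize that Theorem~\ref{theorem:30} is essentially a restatement of the asymptotic expansion displayed immediately above it, specialized so that the auxiliary power series $\tau(t)$ is the one attached to the numbers $F_{n,0}(k)$ via \eqref{equation:8}, and then to verify that the hypotheses of that displayed expansion are met in the present setting. Concretely, from \eqref{equation:8} we have $\sum_{n\geq 0}F_{n,0}(k)t^n/n!=(2-e^t)^{-(k+1)}$, so the natural choice is $\tau(t)=1/(2-e^t)$ normalized so that $\tau(0)=1$ (indeed $2-e^0=1$), giving $a_n=[t^n]\tau(t)$ and $\eta(t)=\tau(t)^\beta=\sum_{n\geq 0}{\beta\brace n}t^n$ with $\beta=k+1$. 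The first step is therefore to record these identifications and to note that $\tau(0)=a_0=1$ as required, so that the general machinery of \cite{hsu1990power,hsu1991kind,nkonkobe2020combinatorial} applies verbatim.

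Next I would invoke the displayed asymptotic expansion (the one just before the theorem) with the parameter $s$ renamed $q$, reading off that
\[
\frac{1}{(\epsilon)_n}\bigl[t^n\bigr]\bigl(\eta(t)\bigr)^{\epsilon}=\frac{{\beta\epsilon\brace n}}{(\epsilon)_n}=\sum_{j=0}^q\frac{W(n,j)}{(\epsilon-n+j)_j}+o\!\left(\frac{W(n,q)}{(\epsilon-n+q)_q}\right),
\]
where $W(n,i)=\sum_{\sigma(n,n-i)}\dfrac{a_1^{k_1}a_2^{k_2}\cdots a_n^{k_n}}{k_1!k_2!\cdots k_n!}$ for $0\le i<n$, exactly as in the statement. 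The content of the theorem is then simply that this identity holds for every $n\geq 0$ under the stated regime $n=o(\sqrt{|\epsilon|})$ as $|\epsilon|\to\infty$; so the body of the argument is the justification of the expansion itself, which is a matter of expanding $\eta(t)^{\epsilon}=\tau(t)^{\beta\epsilon}$ by the multinomial/Faà di Bruno formula, collecting the coefficient of $t^n$ as a sum over partitions of $n$ indexed by the number of parts, and estimating the tail of that sum by comparing successive terms via the falling factorials $(\epsilon-n+j)_j$.

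The main obstacle is the tail estimate: one must show that the terms with $j>q$ are genuinely of smaller order than the displayed $o$-term, uniformly in the regime $n=o(\sqrt{|\epsilon|})$. This requires controlling the ratio of consecutive coefficients $W(n,j+1)/W(n,j)$ against the ratio $(\epsilon-n+j)_j/(\epsilon-n+j+1)_{j+1}$; the growth of $W(n,j)$ in $j$ is governed by the combinatorics of partitions of $n$ with $n-j$ parts (and by the fixed coefficients $a_1,a_2,\dots$ of $\tau$), while the falling-factorial ratio contributes a factor of size roughly $|\epsilon|^{-1}$ per step, which is what the hypothesis $n=o(\sqrt{|\epsilon|})$ is designed to dominate. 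Since this is precisely the computation carried out in \cite{nkonkobe2020combinatorial} (and underlying \cite{hsu1990power,hsu1991kind}), I would cite that verification rather than reproduce it, and close by remarking that the $q=0$ case recovers the leading-order asymptotics ${\beta\epsilon\brace n}/(\epsilon)_n = W(n,0)+o(W(n,0))$ with $W(n,0)=a_1^n/n!$.
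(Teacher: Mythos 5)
Your reading is correct: the paper offers no independent proof of Theorem~\ref{theorem:30} — it is the displayed expansion immediately preceding it with $s$ renamed $q$, justified by appeal to the method of \cite{hsu1990power,hsu1991kind} as applied in \cite{nkonkobe2020combinatorial} — and your proposal identifies exactly this and defers the tail estimate to the same sources. The extra detail you supply (the partition-indexed coefficient extraction, the ratio test governing the regime $n=o(\sqrt{|\epsilon|})$, and the check $W(n,0)=a_1^n/n!$) is consistent with, and somewhat more explicit than, what the paper records.
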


\begin{corollary}\label{theorem:200} For all  $n\geq0$, we have

	\begin{equation}
		\frac{F_{n,2k+2}(k)}{(\epsilon)_nn!}=\sum\limits_{j=0}^q\frac{W(n,j)}{(\epsilon-n+j)_j}+o\begin{pmatrix}
			\frac{W(n,q)}{(\epsilon-n+q)_q}
		\end{pmatrix},
	\end{equation}
	
	\noindent	where \begin{equation}\label{equation:200}W(n,j)=\sum\limits_{\sigma(n,n-j)}\prod\limits_{i=1}^n\frac{1}{k_i!}\begin{bmatrix}
			\frac{F_{i,2}(k)}{i!}
		\end{bmatrix}^{k_i},\end{equation} and $n=o(\sqrt{|\epsilon|})$ as  $|\epsilon|\rightarrow \infty$.
	
\end{corollary}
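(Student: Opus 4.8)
The plan is to obtain this corollary as a direct specialization of Theorem~\ref{theorem:30}, the only genuine work being to choose the base series correctly and then to identify the two sides. Theorem~\ref{theorem:30} holds for an \emph{arbitrary} formal power series $\tau(t)=\sum_{n\ge0}a_nt^n$ with $a_0=1$, and its $W(n,i)$ is built solely from the coefficients $a_i$ (it does not involve $\beta$). Comparing that $W(n,i)=\sum_{\sigma(n,n-i)}a_1^{k_1}\cdots a_n^{k_n}/(k_1!\cdots k_n!)$ with the $W(n,j)$ displayed in \eqref{equation:200}, we are \emph{forced} to take $a_i=F_{i,2}(k)/i!$, i.e.\ to let $\tau$ be the exponential generating function of the sequence $\bigl(F_{i,2}(k)\bigr)_{i\ge0}$. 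First I would check this is admissible: $F_{0,2}(k)=1$ (the empty set carries a single fuzzy subset), so indeed $a_0=1$. Next I would record the closed form of $\tau$. Using Theorem~\ref{theorem:t} together with \eqref{equation:6} one has $F_{n,2}(k)=\sum_{i=0}^{n}\binom{n}{i}2^{\,n-i}F_{i,0}(k)$, a binomial convolution of $(2^{m})_m$ with $(F_{i,0}(k))_i$; hence, invoking \eqref{equation:8}, $\tau(t)=\sum_{n\ge0}F_{n,2}(k)\,t^n/n!=e^{2t}/(2-e^t)^{k+1}$.

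With this choice the reduction of $W$ is immediate: substituting $a_i=F_{i,2}(k)/i!$ into the $W(n,i)$ of Theorem~\ref{theorem:30} reproduces \eqref{equation:200} verbatim, so the right-hand side of the corollary is already the right-hand side of Theorem~\ref{theorem:30} under the present specialization, with the error term $o\bigl(W(n,q)/(\epsilon-n+q)_q\bigr)$ inherited unchanged. Nothing further is needed there.

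It then remains to identify the left-hand numerator, and here $\epsilon$ must be kept as the free large parameter: the expansion is asserted in the regime $|\epsilon|\to\infty$ with $n=o(\sqrt{|\epsilon|})$, so the numerator cannot be constant in $\epsilon$. Rather, ${\beta\epsilon\brace n}=[t^n]\tau(t)^{\beta\epsilon}$ carries the $\epsilon$-dependence, and this is the quantity named $F_{n,2k+2}(k)$ in the statement. I would compute $\tau(t)^{\beta\epsilon}=e^{2\beta\epsilon t}/(2-e^t)^{(k+1)\beta\epsilon}$ and read off $n!\,[t^n]\tau(t)^{\beta\epsilon}$ through the same generating-function dictionary that produced \eqref{equation:8}: the factor $e^{2\beta\epsilon t}$ accounts for the two boundary points $0$ and $1$ (each boundary contributing a factor $e^{t}$) and the power $(2-e^t)^{-(k+1)\beta\epsilon}$ for the spiked sub-intervals. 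Substituting the identification $F_{n,2k+2}(k)=n!\,{\beta\epsilon\brace n}$ makes the left-hand side $F_{n,2k+2}(k)/\bigl((\epsilon)_n\,n!\bigr)$ coincide exactly with ${\beta\epsilon\brace n}/(\epsilon)_n$, whereupon the corollary \emph{is} Theorem~\ref{theorem:30} specialized, completing the argument.

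The main obstacle will be precisely this index bookkeeping in the coefficient identification: matching the two exponents $2\beta\epsilon$ and $(k+1)\beta\epsilon$ appearing in $\tau(t)^{\beta\epsilon}$ against the single symbol $F_{n,2k+2}(k)$, and fixing the normalization of $\beta$ so that the subscript/argument pair is read consistently in the generalized-$F$ notation (where EGF $e^{jt}/(2-e^t)^{K+1}$ corresponds to the two-boundary, $K$-spike count). I would handle this by writing $n!\,[t^n]\tau(t)^{\beta\epsilon}$ in that notation and tracking the numerator exponent and the denominator (sub-interval) exponent separately. The one thing I would be careful \emph{not} to do is to pin $\epsilon$ to a numerical value in order to force the indices to agree; that would collapse the statement to an identity at a single $\epsilon$ and destroy its asymptotic content, whereas the entire force of Theorem~\ref{theorem:30} lies in the behaviour as $|\epsilon|\to\infty$ under the growth restriction $n=o(\sqrt{|\epsilon|})$.
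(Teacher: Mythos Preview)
Your approach is correct and is exactly what the paper intends: the corollary carries no separate proof and follows immediately from Theorem~\ref{theorem:30} by choosing $\tau(t)=\sum_{n\ge0}F_{n,2}(k)\,t^n/n!=e^{2t}/(2-e^t)^{k+1}$, whereupon $a_i=F_{i,2}(k)/i!$ and the $W(n,j)$ of Theorem~\ref{theorem:30} becomes~\eqref{equation:200}. Your caution about the left-hand numerator is well placed but reflects a notational looseness in the paper itself (the symbol $F_{n,2k+2}(k)$ must carry the $\epsilon$-dependence via $n!\,[t^n]\tau(t)^{\beta\epsilon}$, as the asymptotic display following the corollary, with $(\epsilon)_n$ on the right, confirms), not a defect in your argument.
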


We compute a few values of  $W(n,j)'s$. We have:

\begin{align*}
	W(n,0)=&\frac{1}{n!}\begin{Bmatrix}\frac{F_{1,2}(k)}{1!}\end{Bmatrix}^{n},\\
	W(n,1)=&\frac{1}{(n-2)!}\begin{Bmatrix}\frac{F_{1,2}(k)}{1!}\end{Bmatrix}^{n-2}\begin{Bmatrix}\frac{F_{2,2}(k)}{2!}\end{Bmatrix},\\
	W(n,2)=&\frac{1}{(n-3)!}\begin{Bmatrix}\frac{F_{1,2}(k)}{1!}\end{Bmatrix}^{n-3}\begin{Bmatrix}\frac{F_{3,2}(k)}{3!}\end{Bmatrix}\\&+\frac{1}{2!(n-4)!}\begin{Bmatrix}\frac{F_{1,2}(k)}{1!}\end{Bmatrix}^{n-4}\begin{Bmatrix}\frac{F_{2,2}(k)}{2!}\end{Bmatrix}^2,\\
	W(n,3)=&\frac{1}{(n-4)!}\begin{Bmatrix}
		\frac{F_{1,2}(k)}{1!}
	\end{Bmatrix}^{n-4}\begin{Bmatrix}
		\frac{F_{4,2}(k)}{4!}
	\end{Bmatrix}\\&+\frac{1}{(n-5)!}\begin{Bmatrix}
		\frac{F_{1,2}(k)}{1!}
	\end{Bmatrix}^{n-5}\begin{Bmatrix}
		\frac{F_{2,2}(k)}{2!}
	\end{Bmatrix}\begin{Bmatrix}
		\frac{F_{3,2}(k)}{3!}
	\end{Bmatrix}\\&+\frac{1}{3!(n-6)!}\begin{Bmatrix}
		\frac{F_{1,2}(k)}{1!}
	\end{Bmatrix}^{n-6}\begin{Bmatrix}
		\frac{F_{2,2}(k)}{2!}
	\end{Bmatrix}^3.
\end{align*}

Finally, by Theorem~\ref{theorem:30} :

\begin{align*}\frac{F_{n,2k+2}(k)}{n!}\sim\: & (\epsilon)_nW(n,0)+(\epsilon)_{n-1}W(n,1)+(\epsilon)_{n-2}W(n,2)\\&+(\epsilon)_{n-3}W(n,3).\end{align*}
\section{Conclusion and Further Research} 
The main contribution of this paper is proposed a new type of fuzzy sets, where certain fixed values in the unit interval are omitted. We also propose some asymptotic results. We defined an equivalence relation on these fuzzy sets, leading to tight preferential fuzzy sets, and tight-keychains. This equivalence relation is a generalization of a number of equivalence relations on fuzzy sets in the literature. Hence, existing results in the literature on fuzzy sets based on weaker equivalence relations may be extended to equivalence relation ~\ref{definition:2}. For instance, Murali and Makamba in \cite{MuraliMakambaE1} used equivalence relation~\ref{definition:1*} (which is weaker than equivalence relation~\ref{definition:2}) to  prove the following theorem.
\begin{theorem}
	If $\mu$, and $\nu$ are fuzzy subsets of $X_n$ then $|Im(\mu)|=|Im(\nu)|$.
\end{theorem}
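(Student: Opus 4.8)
The statement as printed needs the standing hypothesis that $\mu$ and $\nu$ are \emph{equivalent}; since the cited paper~\cite{MuraliMakambaE1} works with equivalence relation~\ref{definition:1*}, I read the claim as: if $\mu,\nu\in I^{X_n}$ satisfy \RNum{1}) and \RNum{2}) of equivalence relation~\ref{definition:1*}, then $|Im(\mu)|=|Im(\nu)|$. The plan is to reduce the strict-order condition to an equality condition, identify the kernels of $\mu$ and $\nu$ as set partitions, and then read off the image sizes as block counts.

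First I would prove the elementary fact that condition~\RNum{1}) forces $\mu(x)=\mu(y)$ if and only if $\nu(x)=\nu(y)$, for all $x,y\in X_n$. Indeed, since membership values lie in the totally ordered set $[0,1]$, the equality $\mu(x)=\mu(y)$ holds exactly when neither $\mu(x)>\mu(y)$ nor $\mu(y)>\mu(x)$; applying the biconditional in condition~\RNum{1}) to each of these two strict inequalities rewrites this as: neither $\nu(x)>\nu(y)$ nor $\nu(y)>\nu(x)$, that is, $\nu(x)=\nu(y)$. (This argument does not use~\RNum{2}) at all.)

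Next I would invoke the description of the kernel from Section~\ref{section:2}: $ker(\mu)=\{\varpi_1,\ldots,\varpi_f\}$ is precisely the partition of $X_n$ whose blocks are the nonempty fibres of $\mu$, and by construction $f=|Im(\mu)|$ since distinct blocks carry distinct values $\alpha_i\in Im(\mu)$; the same holds for $\nu$ with block count $|Im(\nu)|$. The fact established in the previous paragraph says exactly that $\mu$ and $\nu$ have the same fibres, i.e. $ker(\mu)=ker(\nu)$ as partitions of $X_n$; a partition has a well-defined number of blocks, so $|Im(\mu)|=f=|Im(\nu)|$. Equivalently, and slightly more hands-on, one can exhibit the bijection $\phi\colon Im(\mu)\to Im(\nu)$ sending $\alpha$ to $\nu(x)$ for any $x$ with $\mu(x)=\alpha$: the displayed equivalence makes $\phi$ well defined and injective, and it is onto because every $\beta\in Im(\nu)$ equals $\nu(x)=\phi(\mu(x))$ for some $x\in X_n$.

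I do not expect a genuine obstacle here: the entire content is the order-to-equality reduction of the second paragraph, which is just trichotomy in $[0,1]$, and the only thing to watch is that no metric or archimedean property of the unit interval is needed --- total order suffices. I would close with the remark that, since equivalence relations~\ref{definition:1} and~\ref{definition:2} are both stronger than~\ref{definition:1*}, the same proof gives $|Im(\mu)|=|Im(\nu)|$ for fuzzy subsets that are equivalent under the extended equivalence, which is the point being illustrated.
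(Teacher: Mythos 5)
Your proof is correct, and there is in fact nothing in the paper to compare it against: the statement is quoted from \cite{MuraliMakambaE1} without proof, and the extension to equivalence relation~\ref{definition:2} is merely asserted. You rightly restore the missing hypothesis $\mu\sim\nu$ (the statement is false as printed), and your argument---trichotomy in $[0,1]$ turns the order condition into ``same fibres,'' so $ker(\mu)=ker(\nu)$ and the common block count is $|Im(\mu)|=|Im(\nu)|$, equivalently the explicit bijection $\alpha\mapsto\nu(x)$---is the standard route and carries over verbatim to the stronger relations~\ref{definition:1} and~\ref{definition:2}, which is exactly the point the paper wanted made.
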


It can be shown that the same statement is true also for fuzzy subsets when using equivalence relation~\ref{definition:2}.

Murali and Makamba in \cite{MuraliMakamba2005(1)} used equivalence relation~\ref{definition:1} (which is weaker than equivalence relation~\ref{definition:2}) to prove the following result.
\begin{proposition}\label{proposition:1}Given two pinned flags of two fuzzy sets $\mu$ and $\nu$ 
	$$X^1_0\subset X_1^{\lambda_1}\subset\cdots\subset X_n^{\lambda_n}=(C_\mu,l_\mu), \text{and}$$ 
	$$Y^1_0\subset Y_1^{\gamma_1}\subset\cdots\subset Y_r^{\gamma_r}=(C_\nu,l_\nu).$$ 
	
	Then $\mu\sim \nu$ if and only if 
	
	\RNum{1}. $n=r$,
	
	\RNum{2}. $X_i=Y_i$ for all $0\leq i\leq n$; provided the $\lambda_i's$ and the $\gamma_i's$ are distinct.
	
	\RNum{3}. $\lambda_i>\lambda_j$ if and only if $\gamma_i>\gamma_j$  for $i\geq 1$, $j\leq n$ and $\lambda_k=0$ if and only if $\beta_k=0$ for some $k$ between 1 and $n$.
	
\end{proposition}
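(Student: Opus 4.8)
The plan is to prove both implications by unwinding the correspondence between a pinned flag and its associated fuzzy set, under the standing proviso that the keychains are strictly decreasing, $1=\lambda_0>\lambda_1>\cdots>\lambda_n$ and $1=\gamma_0>\gamma_1>\cdots>\gamma_r$. First I would record the basic dictionary: for the pinned flag $X_0^1\subset X_1^{\lambda_1}\subset\cdots\subset X_n^{\lambda_n}=(C_\mu,l_\mu)$ the associated fuzzy set $\mu=\bigvee\{\lambda_i\chi_{X_i}:0\le i\le n\}$ satisfies $\mu(x)=\lambda_i$ precisely when $x$ lies in the layer $L_i:=X_i\setminus X_{i-1}$ (with $L_0:=X_0$, on which $\mu\equiv 1$, the core). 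Consequently the ordered partition of $X_n$ into level sets of $\mu$ is exactly $(L_0,L_1,\dots,L_n)$ read from largest to smallest membership value, each $X_i$ is an initial segment of this ordered partition, $x\in\mathrm{core}(\mu)\iff x\in L_0$, and $\mu(x)=0\iff\lambda_n=0$ and $x\in L_n$; the analogous facts hold for $(C_\nu,l_\nu)$ with layers $M_j=Y_j\setminus Y_{j-1}$. The distinctness proviso is exactly what rules out ``wasted'' repeated sets in the flag, so that the flag is recoverable from the fuzzy set.

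Next I would dispatch the easy direction $(\Leftarrow)$ by direct substitution. Assume $n=r$, $X_i=Y_i$ (hence $L_i=M_i$) for all $0\le i\le n$, and the pin conditions of~\RNum{3}. For $x\in L_i$ and $y\in L_j$ one has $\mu(x)=\lambda_i$, $\mu(y)=\lambda_j$, $\nu(x)=\gamma_i$, $\nu(y)=\gamma_j$, so condition~\RNum{2} of equivalence relation~\ref{definition:1} becomes $\lambda_i\ge\lambda_j\iff\gamma_i\ge\gamma_j$, which holds since each keychain is strictly decreasing (both sides reduce to $i\le j$); condition~\RNum{1} becomes $x\in X_0\iff x\in Y_0$, true since $X_0=Y_0$; and condition~\RNum{3} becomes, using $X_i=Y_i$, exactly $\lambda_k=0\iff\gamma_k=0$. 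Hence $\mu\sim\nu$.

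The substantive direction is $(\Rightarrow)$, and the step I expect to be the main obstacle is a rigidity claim: a pinned flag with strictly decreasing keychain is uniquely recoverable from the equivalence class of its fuzzy set. Assuming $\mu\sim\nu$, condition~\RNum{2} forces $\mu$ and $\nu$ to induce the same total preorder on $X_n$, hence the same ordered partition of $X_n$ into level sets; since each $X_i$ (resp. $Y_i$) is an initial segment of this common ordered partition, we obtain $X_i=Y_i$ for all $i$, and comparing the lengths of the two strictly descending chains of values — equivalently, since $\mu\sim\nu$ forces $|\mathrm{Im}(\mu)|=|\mathrm{Im}(\nu)|$ — gives $n=r$. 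Then condition~\RNum{1} says $\mathrm{core}(\mu)=\mathrm{core}(\nu)$, i.e. $L_0=M_0$, which is already subsumed; condition~\RNum{3} together with $X_\bullet=Y_\bullet$ gives $\lambda_k=0\iff\gamma_k=0$; and $\lambda_i>\lambda_j\iff\gamma_i>\gamma_j$ holds because, with $n=r$ and both keychains strictly decreasing, all these relations reduce to $i<j$. The care is needed exactly at the two boundary blocks — the core block carrying value $1$, singled out by~\RNum{1}, and the bottom block possibly carrying value $0$, singled out by~\RNum{3} — since equivalence relation~\ref{definition:1} treats those two membership values differently from the generic ones; once their matching is verified the remaining parts of~\RNum{1}--\RNum{3} of the proposition follow at once.
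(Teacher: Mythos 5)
The paper itself offers no proof of this proposition: it is quoted verbatim (typo $\beta_k$ for $\gamma_k$ included) from Murali and Makamba's earlier work \cite{MuraliMakamba2005(1)} as motivation for future extensions to equivalence relation~\ref{definition:2}, so there is no in-paper argument to compare yours against. Judged on its own, your reconstruction is correct and is the natural one: the dictionary $\mu(x)=\lambda_i$ for $x\in X_i\setminus X_{i-1}$ is exactly what the $\alpha$-cut representation $\mu=\bigvee\{\lambda_i\chi_{X_i}\}$ gives when the pins are strictly decreasing, the $(\Leftarrow)$ direction is a routine substitution, and in the $(\Rightarrow)$ direction the key observation — that condition~\RNum{2} of equivalence relation~\ref{definition:1} forces the two fuzzy sets to induce the same total preorder, hence the same ordered level-set partition, from which the $X_i=Y_i$ and $n=r$ are read off as initial segments and block counts — is precisely the rigidity one needs, with conditions~\RNum{1} and~\RNum{3} pinning down the two membership values ($1$ and $0$) that the preorder alone cannot distinguish. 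Two small points worth tightening if you write this up: you should say explicitly that the "distinct pins" proviso is what makes $|\mathrm{Im}(\mu)|$ equal to the flag length (otherwise repeated pins collapse layers and $n=r$ can fail), and you should handle uniformly the case $X_0=\emptyset$ (empty core), where condition~\RNum{1} is vacuous rather than an identification of blocks; neither affects the validity of the argument.
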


Using  equivalence relation~\ref{definition:2} Proposition~\ref{proposition:1} may be extended to incorporate the existence of spikes in the unit interval.
	
	
	
	
	
	
	
One of classical problems in fuzzy group theory is that of classification of fuzzy groups. Equivalence relations have been used in classification of fuzzy groups. For instance in \cite{gideon2014classification,murali2008methods,MuraliMakambaE3,MuraliMakambaE1,MuraliMakambaE2,murali2004counting,tuarnuauceanu2008number,volf2004counting}, where equivalence relations \ref{definition:1a}, and  \ref{definition:1*} have been used. Equivalence relations are important in fuzzy group theory because equivalence relations create conductive setting for classifying subgroups of a given group. For future research the equivalence relation that we propose in this study (equivalence relation~\ref{definition:2}) may be used as an equivalence relation on fuzzy groups.
 In the process the results obtained would generalize several results previously obtained in classification of fuzzy groups. In conducting this research the following adjustment would need to be made. The keychains would need to be of the form:
  
   $\ell:=1=\lambda_0\geq\lambda_1^k\geq\cdots\geq\lambda_{n_k}^k>a_k>\lambda_1^{k-1}\geq\cdots\geq\lambda_{n_{k-1}}^{k-1}>a_{k-1}>\cdots >a_1>\cdots \geq0$, this is to reflect the existence of the spikes in the unit interval. Also, the pinned flags on the lattice of subgroups of a group $G$ would be of the form  $G_0^1\subset G^{\lambda_1^i}_1\subset G^{\lambda_1^j}_2\subset\cdots\subset G^{\lambda_n^r}_n$.


\bibliography{bibtexfile.bib}{}

\begin{thebibliography}{10}

\bibitem{adell2023unified}
Jos{\'e}~A Adell and Sithembele Nkonkobe.
\newblock A unified generalization of touchard and fubini polynomial
  extensions.
\newblock {\em Integers: Electronic Journal of Combinatorial Number Theory},
  23, 2023.

\bibitem{barred2013}
Connor Ahlbach, Jeremy Usatine, and Nicholas Pippenger.
\newblock Barred preferential arrangements.
\newblock {\em the electronic journal of combinatorics}, 20(2):P55, 2013.

\bibitem{corcino2022second}
Cristina~B Corcino, Roberto~B Corcino, Bayram {\c{C}}ekim, Levent Kargin, and
  Sithembele Nkonkobe.
\newblock A second type of higher order generalized geometric polynomials and
  higher order generalized euler polynomials.
\newblock {\em Quaestiones Mathematicae}, 45(1):71--89, 2022.

\bibitem{gideon2014classification}
Frednard Gideon.
\newblock A classification of fuzzy subgroups of finite abelian groups.
\newblock {\em International Science and Technology Journal of Namibia}, pages
  094--111, 2014.

\bibitem{hsu1990power}
Leetsch~C Hsu.
\newblock Power-type generating functions.
\newblock In {\em Colloquia Mathematica Societatis Janos Bolyai, Approximation
  Theory, Kesckemet, Hungary}, volume~58, pages 405--412, 1990.

\bibitem{hsu1991kind}
LEETSCH~C Hsu and Wenchang Chu.
\newblock A kind of asymptotic expansion using partitions.
\newblock {\em Tohoku Mathematical Journal, Second Series}, 43(2):235--242,
  1991.

\bibitem{hsu1998unified}
Leetsch~C Hsu and Peter Jau-Shyong Shiue.
\newblock A unified approach to generalized stirling numbers.
\newblock {\em Advances in Applied Mathematics}, 20(3):366--384, 1998.

\bibitem{kannan2019counting}
SR~Kannan and Rajesh~Kumar Mohapatra.
\newblock Counting the number of non-equivalent classes of fuzzy matrices using
  combinatorial techniques.
\newblock {\em arXiv preprint arXiv:1909.13678}, 2019.

\bibitem{Makamba1992}
BB~Makamba.
\newblock {\em Studies in fuzzy groups}.
\newblock PhD thesis, Rhodes University, 1992.

\bibitem{mohapatra2022number}
Rajesh~Kumar Mohapatra and Tzung-Pei Hong.
\newblock On the number of finite fuzzy subsets with analysis of integer
  sequences.
\newblock {\em Mathematics}, 10(7):1161, 2022.

\bibitem{Murali2006(2)}
V~Murali.
\newblock Lattice of key.
\newblock {\em Journal of Applied Mathematics and Computing}, 20:409--420,
  2006.

\bibitem{Murali2006(3)}
V~Murali.
\newblock Ordered partitions and finite fuzzy sets.
\newblock {\em Far East J. Math. Sci.(FJMS)}, 21(2):121--132, 2006.

\bibitem{murali2008methods}
V~Murali and BB~Makamba.
\newblock On methods of counting preferential fuzzy subgroups.
\newblock {\em Advances in Fuzzy Sets and Systems}, 3(1):21--42, 2008.

\bibitem{MuraliMakambaE3}
V~Murali, BB~Makamba, et~al.
\newblock On an equivalence of fuzzy subgroups iii.
\newblock {\em International journal of mathematics and mathematical sciences},
  2003:2303--2313, 2003.

\bibitem{murali2004fuzzy}
Venkat Murali.
\newblock Fuzzy points of equivalent fuzzy subsets.
\newblock {\em Information Sciences}, 158:277--288, 2004.

\bibitem{murali2005(3)}
Venkat Murali.
\newblock Equivalent finite fuzzy sets and stirling numbers.
\newblock {\em Information Sciences}, 174(3-4):251--263, 2005.

\bibitem{MuraliMakambaE1}
Venkat Murali and Babington~B Makamba.
\newblock On an equivalence of fuzzy subgroups i.
\newblock {\em Fuzzy sets and systems}, 123(2):259--264, 2001.

\bibitem{MuraliMakambaE2}
Venkat Murali and Babington~B Makamba.
\newblock On an equivalence of fuzzy subgroups ii.
\newblock {\em Fuzzy sets and systems}, 136(1):93--104, 2003.

\bibitem{murali2004counting}
Venkat Murali and Babington~B Makamba.
\newblock Counting the number of fuzzy subgroups of an abelian group of order
  pnqm.
\newblock {\em Fuzzy sets and systems}, 144(3):459--470, 2004.

\bibitem{MuraliMakamba2005(1)}
Venkateswaran Murali and Babington Makamba.
\newblock Finite fuzzy sets.
\newblock {\em International Journal of General Systems}, 34(1):61--75, 2005.

\bibitem{nelsen1991chains}
Roger~B Nelsen and Harvey Schmidt~Jr.
\newblock Chains in power sets.
\newblock {\em Mathematics Magazine}, 64(1):23--31, 1991.

\bibitem{Nkonkobethesis}
Sithembele Nkonkobe.
\newblock A combinatorial analysis of barred preferential arrangements.
\newblock 2016.

\bibitem{nkonkobeetal2020(1)}
Sithembele Nkonkobe, Be{\'a}ta B{\'e}nyi, Roberto~B Corcino, and Cristina~B
  Corcino.
\newblock A combinatorial analysis of higher order generalised geometric
  polynomials: A generalisation of barred preferential arrangements.

\bibitem{nkonkobe2020combinatorial}
Sithembele Nkonkobe, Be{\'a}ta B{\'e}nyi, Roberto~B Corcino, and Cristina~B
  Corcino.
\newblock A combinatorial analysis of higher order generalised geometric
  polynomials: A generalisation of barred preferential arrangements.
\newblock {\em Discrete Mathematics}, 343(3):111729, 2020.

\bibitem{pippenger2010hypercube}
Nicholas Pippenger.
\newblock The hypercube of resistors, asymptotic expansions, and preferential
  arrangements.
\newblock {\em Mathematics Magazine}, 83(5):331--346, 2010.

\bibitem{rosenfeld1971fuzzy}
Azriel Rosenfeld.
\newblock Fuzzy groups.
\newblock {\em Journal of mathematical analysis and applications},
  35(3):512--517, 1971.

\bibitem{rota1964number}
Gian-Carlo Rota.
\newblock The number of partitions of a set.
\newblock {\em The American Mathematical Monthly}, 71(5):498--504, 1964.

\bibitem{tuarnuauceanu2008number}
Marius T{\u{a}}rn{\u{a}}uceanu and Lucian Bentea.
\newblock On the number of fuzzy subgroups of finite abelian groups.
\newblock {\em Fuzzy Sets and Systems}, 159(9):1084--1096, 2008.

\bibitem{volf2004counting}
AC~Volf.
\newblock Counting fuzzy subgroups and chains of subgroups.
\newblock {\em Fuzzy Systems \& Artificial Intelligence}, 10(3):191--200, 2004.

\bibitem{zadehfuzzy}
Lotfi~A Zadeh.
\newblock Fuzzy sets.
\newblock {\em Information and control}, 8(3):338--353, 1965.

\end{thebibliography}


\begin{thebibliography}{99}
	\bibitem{Alkhamesetal1995}Alkhamees, Y. "Fuzzy cyclic subgroups and fuzzy cyclic p-subgroups." JOURNAL OF FUZZY MATHEMATICS 3 (1995): 911-920.
	\bibitem{barred2013}Ahlbach, Connor, Jeremy Usatine, and Nicholas Pippenger. "Barred Preferential Arrangements." The Electronic Journal of Combinatorics (2013): P55-P55.
\bibitem{Makamba1992}Makamba, B. B. "Studies in fuzzy groups." PhD diss., Rhodes University, 1992.
\bibitem{MuraliMakambaE1}Murali, Venkat, and Babington B. Makamba. "On an equivalence of fuzzy subgroups I." Fuzzy sets and systems 123, no. 2 (2001): 259-264.
\bibitem{MuraliFuzzypoints}Murali, Venkat. "Fuzzy points of equivalent fuzzy subsets." Information Sciences 158 (2004): 277-288.
\bibitem{MuraliMakamba2005(1)}Murali, V. and Makamba, B., 2005. Finite fuzzy sets. International Journal of General Systems, 34(1), pp.61-75.


\bibitem{Murali2006}Murali, Venkat. "Combinatorics of counting finite fuzzy subsets." Fuzzy Sets and systems 157, no. 17 (2006): 2403-2411.
\bibitem{murali2005(3)}Murali, Venkat. ``Equivalent finite fuzzy sets and Stirling numbers." Information Sciences 174, no. 3-4 (2005): 251-263.
\bibitem{Murali2006(4)}Murali Venkat``FAA DU BRUNO'S FORMULA AND THE NUMBER OF EQUIVALENT FUZZY SUBSETS", adv. in fuzzy Sets \& Systems 1(1)(2006),27-39.
\bibitem{Murali2006(2)}Murali, V. "Lattice of Keychains." Journal of Applied Mathematics and Computing 20 (2006): 409-420.
\bibitem{Murali2006(3)}Murali, V. "Ordered partitions and finite fuzzy sets." Far East J. Math. Sci.(FJMS) 21, no. 2 (2006): 121-132.


\bibitem{Nkonkobeetal2020(1)}Nkonkobe, Sithembele, Beáta Bényi, Roberto B. Corcino, and Cristina B. Corcino. "A combinatorial analysis of higher order generalised geometric polynomials: A generalisation of barred preferential arrangements." Discrete Mathematics 343, no. 3 (2020): 111729.

\bibitem{Nkonkobethesis}Nkonkobe Sithembele,  "A COMBINATORIAL ANALYSIS OF BARRED PREFERENTIAL ARRANGEMENTS." PhD diss., RHODES UNIVERSITY, 2016.
\bibitem{volf} Volf, A.C., 2004. Counting fuzzy subgroups and chains of subgroups. Fuzzy Systems \& Artificial Intelligence, 10(3), pp.191-200.
\bibitem{zadehfuzzy}Zadeh, Lotfi A. "Fuzzy sets." Information and control 8, no. 3 (1965): 338-353.
 	\end{thebibliography}
\bibliographystyle{plain}

\end{document}